\newtheorem{thm}{Theorem}
\newtheorem*{mtheorem*}{Main Result}
\newtheorem{lem}[thm]{Lemma}
\newtheorem{cor}[thm]{Corollary}
\numberwithin{thm}{section}
\theoremstyle{definition}
\renewcommand{\qedsymbol}{$\blacksquare$}
\theoremstyle{remark}
\newtheorem{rem}[thm]{Remark}
\theoremstyle{definition}
\declaretheoremstyle[%
  spaceabove=-6pt,%
  spacebelow=6pt,%
  headfont=\normalfont\itshape,%
  postheadspace=1em,%
  qed=\qedsymbol%
]{mystyle}
\newcommand{\m}[1]{\mathcal{#1}}
\mathchardef\mh="2D
\title{Minimal multiple blocking sets}
\author{Anurag Bishnoi, Sam Mattheus and Jeroen Schillewaert}
\begin{document}
\maketitle

\begin{abstract}
We prove that a minimal $t$-fold blocking set in a finite projective plane of order $n$ has cardinality at most \[\frac{1}{2} n\sqrt{4tn - (3t + 1)(t - 1)} + \frac{1}{2} (t - 1)n + t.\]
This is the first general upper bound on the size of minimal $t$-fold blocking sets in finite projective planes and it generalizes the classical result of Bruen and Thas on minimal blocking sets. 
From the proof it directly follows that if equality occurs in this bound then every line intersects the blocking set $S$ in either $t$ points or $\frac{1}{2}(\sqrt{4tn  - (3t + 1)(t - 1)}  + t - 1) + 1$ points.
We use this to show that for $n$ a prime power, equality can occur in our bound in exactly one of the following three cases: (a) $t = 1$, $n$ is a square and $S$ is a unital; (b) $t = n - \sqrt{n}$, $n$ is a square and $S$ is the complement of a Baer subplane; (c) $t = n$ and $S$ is equal to the set of all points except one. 
For a square prime power $q$ and $t \leq \sqrt{q} + 1$, we give a construction of a minimal $t$-fold blocking set $S$ in $\mathrm{PG}(2,q)$ with $|S| = q\sqrt{q} + 1 + (t - 1)(q - \sqrt{q} + 1)$. 
Furthermore, we obtain an upper bound on the size of minimal blocking sets in symmetric $2$-designs and use it to give new proofs of other known results regarding tangency sets in higher dimensional finite projective spaces. 
We also discuss further generalizations of our bound. 
In our proofs we use an incidence bound on combinatorial designs which follows from applying the expander mixing lemma to the incidence graph of these designs. 
\end{abstract}

\section{Introduction}
\textit{Blocking sets} are sets of points in a finite projective or affine plane  that intersect every line in at least one point.
These objects have been extensively studied over the years. 
While typically the focus is on the minimum size of a blocking set, upper bounds on \textit{minimal} blocking sets have also been studied; a blocking set $S$ is called minimal if none of its proper subsets forms a blocking set or equivalently if every point of $S$ is contained in a line intersecting $S$ in exactly one point. 

A well known result of Bruen and Thas \cite[Corollary 6]{Bruen-Thas77} states that in a finite projective plane of order $n$ the size of a minimal blocking set is bounded above by $n \sqrt{n} + 1$. This bound is sharp when the plane is Desarguesian and $n$ is a square (so, an even power of a prime), as shown by the points of a non-degenerate Hermitian variety, i.e. a set of points in $\mathrm{PG}(2,q^2)$ which is projectively equivalent to the set of points $(x,y,z) \in \mathrm{PG}(2,q^2)$ satisfying $x^{q+1}+y^{q+1}+z^{q+1}=0$. In fact, if a minimal blocking set has size $n \sqrt{n} + 1$, then every line intersects the blocking set in either $1$ point or $\sqrt{n} + 1$ points, and thus it forms a unital (see \cite{Barwick-Ebert_book} for the definition of unitals and their basic properties). 
For projective planes of non-square order $n$, $n \neq 5$, this bound has been improved to $n \sqrt{n} + 1 - \frac{1}{4}s(1 - s)n$ where $s$ is the fractional part of $\sqrt{n}$ \cite[Theorem 5.1]{Szonyi05}. 
The reader is referred to \cite{Blokhuis96_survey} and \cite{DeBeule-Storme12} for surveys on blocking sets in finite projective spaces, and to \cite{Ball_book} and \cite{Dembowski} for the relevant background in finite geometry.

The  main objective of this paper is to generalize the Bruen-Thas upper bound for blocking sets to the case of \textit{multiple blocking sets}.

A $t$-fold blocking set in a projective plane $\pi$ of order $n$ is a set of points $S$ such that each line of $\pi$ intersects $S$ in at least $t$ points and some line of $\pi$ intersects $S$ in exactly $t$ points. It is called minimal if every point of $S$ is contained in a line intersecting $S$ in exactly $t$ points. 
For $t = 2, 3$, these sets are known as double and triple blocking sets, respectively. 
Multiple blocking sets were introduced by Bruen in \cite{Bruen83} and general lower bounds on these sets were provided by Ball \cite{Ball96}.
As far as we know, there are no non-trivial general upper bounds on \textit{minimal} multiple blocking sets. The following is our main result. 

\begin{thm}
\label{thm:main}
A minimal $t$-fold blocking set $S$ in a finite projective plane $\pi$ of order $n$ has size at most \[\frac{1}{2} n\sqrt{4tn - (3t + 1)(t - 1)} + \frac{1}{2} (t - 1)n + t.\]
If the size of $S$ is equal to this upper bound, then every line of $\pi$ intersects $S$ in exactly $t$ or $\frac{1}{2}(\sqrt{4tn  - (3t + 1)(t - 1)}  + t - 1) + 1$ points. Moreover, if $n$ is a prime power, then equality can only occur in one of the following cases:
\begin{itemize}
\item $t=1$, $n$ a square, and $S$ is a unital in $\pi$. 
\item $t=n-\sqrt{n}$, $n$ a square, and $S$ is the complement of a Baer subplane in $\pi$.
\item $t=n$ for any $n$, and $S$ is the plane $\pi$ with one point removed.
\end{itemize} 
\end{thm}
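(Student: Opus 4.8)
The plan is to prove the bound by a single Cauchy--Schwarz estimate applied to the line--intersection numbers of $S$ (this is the content of the expander mixing lemma for the incidence graph of $\pi$), then read off the equality case, and finally classify the extremal configurations when $n$ is a prime power. For a line $\ell$ put $x_\ell=|\ell\cap S|\ge t$, with equality precisely on the \emph{tangents}; write $k=|S|$ and $L=n^2+n+1$. The usual double counts give $\sum_\ell x_\ell=k(n+1)$ and $\sum_\ell\binom{x_\ell}{2}=\binom k2$, hence $\sum_\ell x_\ell^2=k(k+n)$. If $T$ is the number of tangents, minimality gives $tT=\sum_{\ell\text{ tangent}}x_\ell=\sum_{p\in S}\#\{\text{tangents through }p\}\ge k$, so $T\ge k/t$. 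Applying Cauchy--Schwarz with the all-ones vector over the $L-T$ non-tangent lines, $\bigl(\sum_{\text{non-tan}}x_\ell\bigr)^2\le(L-T)\sum_{\text{non-tan}}x_\ell^2$, and substituting the three identities turns this into $kn(k-L)\le T\cdot A$ with $A=2tk(n+1)-t^2L-k(k+n)$. A discriminant computation shows $A<0$ for every $k$ whenever $1\le t\le n$ (the condition $A<0$ is equivalent to $n>\tfrac{4t(t-1)}{4t-1}$, which holds because the right-hand side is always strictly smaller than $t$); the degenerate case $t=n+1$ forces $S=\pi$, where the bound again holds with equality. Since $A<0$ and $k<L$, substituting $T\ge k/t$ gives $k^2-k(tn+2t-n)-t(n-t)L\le0$; solving this quadratic and checking that its discriminant $(tn+2t-n)^2+4t(n-t)L$ equals $n^2\bigl(4tn-(3t+1)(t-1)\bigr)$ yields exactly the stated bound.

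\textbf{Equality.} If $k$ meets the bound, then both inequalities above are tight: tightness in Cauchy--Schwarz forces all non-tangent lines to have the same number $m$ of points of $S$, so $S$ has exactly two line-intersection numbers $t<m$, and tightness of $tT\ge k$ forces $T=k/t$. In any set of type $(t,m)$ every point of $S$ lies on the same number $\bigl((n+1)m-n-k\bigr)/(m-t)$ of tangents, and $T=k/t$ is equivalent to this number being $1$, hence to $k=n(m-1)+t$; combined with the explicit value of $k$ this gives $m=\tfrac12\bigl(\sqrt{4tn-(3t+1)(t-1)}+t-1\bigr)+1$, the claimed second intersection number. A dual count (the $n+1$ lines through a point not in $S$ partition the $k$ points of $S$ into $t$-blocks and $m$-blocks) shows every point off $S$ lies on a constant number $\sigma=1+n/(m-t)$ of tangents.

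\textbf{Classification for $n$ a prime power.} Put $r=m-t$. Integrality of $\sigma$ forces $r\mid n$; integrality of $T=k/t=\bigl(n(m-1)+t\bigr)/t$ forces $t\mid n(m-1)$; and integrality of $m$ forces $4tn-(3t+1)(t-1)$ to be a perfect square. Using the relation $r(t+r-1)=t(n+1-t)$ to eliminate $n$, these conditions together with $1\le t\le n$ reduce to a finite Diophantine problem; its integral solutions include, for every $n$, the case $t=n$, and, whenever $n$ is a square, the cases $t=1$ and $t=n-\sqrt n$, while the remaining solutions all have $n$ composite and not a prime power, so the prime-power hypothesis leaves exactly these three. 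It then remains to identify $S$: for $t=1$, $S$ is a set of $n\sqrt n+1$ points meeting every line in $1$ or $\sqrt n+1$ points, i.e. a unital by definition; for $t=n$ one computes $m=n+1$, so $S$ misses exactly one point $P$, the tangents are precisely the $n+1$ lines through $P$, and $S=\pi\setminus\{P\}$; for $t=n-\sqrt n$ one computes $m=n$, so the complement $\comp S$ is a blocking set of size $n+\sqrt n+1$ possessing a $1$-secant, hence a Baer subplane by the classical characterisation of blocking sets of that size, and $S$ is its complement.

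\textbf{Main obstacle.} The bound itself is a one-step argument, and the equality-value and tangent-structure deductions are routine once it is in place. The delicate part is the last step: the divisibility and perfect-square conditions alone admit spurious integral solutions (for instance with $n$ a composite non-prime-power), so the prime-power hypothesis must genuinely be invoked to discard them, and the final identification relies on external structural results, namely the definition of a unital and the characterisation of minimal blocking sets of size $n+\sqrt n+1$ as Baer subplanes.
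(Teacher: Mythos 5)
Your derivation of the bound and of the equality structure is correct, and it is essentially the paper's own elementary route (the ``variance trick'' of Section~\ref{sec:alternate}): your Cauchy--Schwarz estimate over the non-$t$-secant lines, combined with the count $T\ge |S|/t$ of $t$-secants coming from minimality, is the same computation organised slightly differently, and your conditions $r\mid n$ (with $r=m-t$) and $r(t+r-1)=t(n+1-t)$ are exactly the conditions $(\star)$ and $(\dagger)$ that the paper extracts before its classification. This part checks out, including the verification that the discriminant equals $n^2\bigl(4tn-(3t+1)(t-1)\bigr)$, the sign argument for $A<0$, and the computation of the second intersection number and of the constant tangent numbers on and off $S$.

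The gap is in the classification for $n$ a prime power, which is where the paper does its real work and where you only assert the conclusion. The sentence ``these conditions \dots reduce to a finite Diophantine problem; its integral solutions include \dots while the remaining solutions all have $n$ composite and not a prime power'' is precisely the statement that must be proved, and you give no argument for it (nor is the problem literally finite: $n,t,r$ range over infinitely many triples, including the three infinite families you list, so one cannot simply inspect a list of solutions). The paper's proof of this step is a genuine $p$-adic case analysis: writing $n=p^k$, $r=b-t+1=p^h$ (from $r\mid n$), and $t=\alpha p^l$ with $p\nmid\alpha$, it deduces from $(\dagger)$ that $t\mid b(b+1)=(p^h-1+t)(p^h+t)$, hence $l\le h$ and $\alpha\mid p^h-1$, and then rules out $0<l<h$, forces $\alpha=p^h-1$ and $q=p^{2h}$ when $l=h>0$ (giving $t=n-\sqrt n$), forces $\alpha=1$ and $q=p^{2h}$ when $l=0<h$ (giving $t=1$), and gives $b=t=n$ when $l=h=0$. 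Without an argument of this kind your proposal establishes the upper bound and the type-$(t,m)$ structure at equality, but not the ``moreover'' part of the theorem. (The final identifications of $S$ --- unital, complement of a Baer subplane via the classical characterisation of blocking sets of size $n+\sqrt n+1$, and $\pi$ minus a point --- are fine once the three admissible values of $t$ are actually established.)
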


Note that by taking $t = 1$ in Theorem \ref{thm:main} we recover the result of Bruen and Thas. A line which intersects a set $S$ of points in a plane in $k$ points will be called a \textit{$k$-secant} of $S$, and for $k = 1$ it will be called  a \textit{tangent} of $S$. 


The technique used to prove Theorem \ref{thm:main} involves an incidence bound  on combinatorial designs which is a direct consequence of the bipartite version of the expander mixing lemma. 
We believe that this proof technique is of independent interest and it can potentially be used to prove more results in finite geometry. 
We will discuss this incidence bound  in Section \ref{sec:incidence_bound}. 
The proof of our main result is contained in Sections \ref{sec:proof} and \ref{sec:equality}. 
In Section \ref{sec:alternate}, we give an alternative proof which is in similar spirit to some of the known proofs of the Bruen-Thas upper bound and does not involve spectral techniques. 
In Section \ref{sec:construction} we provide a construction of a minimal $t$-fold blocking set of size $q\sqrt{q} + 1 + (t-1)(q - \sqrt{q} + 1)$ in the Desarguesian projective plane $\mathrm{PG}(2, q)$, for $q$ square and $t \leq \sqrt{q} + 1$, by adding $(t - 1)(q - \sqrt{q} + 1)$ carefully chosen points to a non-degenerate Hermitian variety. The construction of a ``large'' $t$-fold minimal blocking set when $q$ is not a square appears to be hard. 
In Section \ref{sec:higher_dim}, we prove general upper bounds on minimal blocking sets in symmetric designs using the main incidence bound. As a corollary we obtain another result of Bruen and Thas \cite{Bruen-Thas82,Thas74}. 
We also discuss multiple blocking sets with respect to hyperplanes in $\mathrm{PG}(n,q)$. 
In Section \ref{sec:further}, we discuss another generalization of the Bruen-Thas upper bound that can be obtained using our techniques. 
Finally in Section \ref{sec:open} we mention some interesting open problems related to our work. 


\section{The incidence bound}
\label{sec:incidence_bound}
%
%

A \textit{$2$-$(v, k, \lambda)$ design} is a collection $\m B$ of $k$-subsets of a $v$ element set $X$ such that every two distinct elements of $X$ are contained together in exactly $\lambda$ elements of $\m B$. 
The elements of $X$ are called \textit{points} and the elements of $\m B$ are called \textit{blocks}.  
From double counting it follows that the number of blocks through any given point in a $2$-$(v, k, \lambda)$ design is equal to $r \coloneqq \frac{\lambda (v - 1)}{(k - 1)}$, and that the total number of blocks is equal to $b \coloneqq \frac{\lambda v(v - 1)}{k(k-1)}$. 
The parameter $r$ of the design is also known as the replication number. 
A design is called \textit{symmetric} if $b = v$, or equivalently $r = k$. 

The incidence graph of a $2$-$(v, k, \lambda)$ design is the bipartite graph $G = (L, R, E)$ obtained by taking $L$ as the set of points, $R$ as the sets of blocks and taking an edge between a point and a block if the point is contained in the block.
This graph is biregular with $d_L = r$ and $d_R = k$. 
It is well known that the non-zero eigenvalues of this graph are $\sqrt{rk}, \sqrt{r - \lambda}, - \sqrt{r - \lambda}, -\sqrt{rk}$.
By applying the expander mixing lemma \cite[Section 2.4]{Hoory-Linial-Wigderson06} to this  graph we get the following incidence bound, which was first proved by Haemers using interlacing techniques \cite[Section 5]{Haemers95_interlacing} and recently rediscovered by Lund and Saraf \cite[Theorem 1]{Lund-Saraf16}\footnote{Lund and Saraf have given a weaker bound in \cite[Theorem 1]{Lund-Saraf16}, but this stronger version also follows from the expander mixing lemma.}. 

\begin{lem}[Incidence Bound]

\label{lem:incidence_bound}
Let $(X, \m B)$ be a $2 \mh (v, k, \lambda)$ design with total number of blocks $b$ and replication number $r$.
Let $S$ be a subset of $X$ and let $\m L$ be a subset of $\m B$. 
Let $i(S, \m L)$ denote the cardinality of the set $\{(x, L) \in S \times \m B \mid x \in L\}$, i.e.  the number of incidences between the set $S$ and the set $\m L$. 
Then we have 
\[\left\lvert i(S, \m L) - \frac{r|S||\m L|}{b} \right\rvert \leq \sqrt{r - \lambda} \sqrt{|S||\m L|\left(1 - \frac{|S|}{v}\right)\left(1 - \frac{|\m L|}{b}\right)}.\]
If equality occurs, then for each $S' \in \{S, X \setminus S\}$ and each $\m L' \in \{\m L, \m B \setminus \m L\}$, every point of $S'$ is contained in a constant number of blocks of $\m L'$ and every block of $\m L'$ contains a constant number of points of $\m S'$ 
\end{lem}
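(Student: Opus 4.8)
The plan is to deduce the inequality from the bipartite version of the expander mixing lemma, and, since it costs little more, to run the underlying singular-value argument directly so that the equality case drops out of the same computation. Let $N$ be the $v \times b$ point-block incidence matrix of the design, so that $N N^{T} = (r - \lambda)I + \lambda J$. Hence $\mathbf{1}_{v}$ is an eigenvector of $N N^{T}$ with eigenvalue $rk$ and every vector orthogonal to $\mathbf{1}_{v}$ is an eigenvector with eigenvalue $r - \lambda$; equivalently $N$ has exactly two singular values, $\sqrt{rk}$ (simple) and $\sqrt{r - \lambda}$. Writing $\mathbf{1}_{S} \in \mathbb{R}^{v}$ and $\mathbf{1}_{\m L} \in \mathbb{R}^{b}$ for the characteristic vectors of $S$ and $\m L$, we have $i(S, \m L) = \mathbf{1}_{S}^{T} N \mathbf{1}_{\m L}$.

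First I would split $\mathbf{1}_{S} = \tfrac{|S|}{v}\mathbf{1}_{v} + \mathbf{x}$ with $\mathbf{x} \perp \mathbf{1}_{v}$ and $\mathbf{1}_{\m L} = \tfrac{|\m L|}{b}\mathbf{1}_{b} + \mathbf{y}$ with $\mathbf{y} \perp \mathbf{1}_{b}$. Using $N\mathbf{1}_{b} = r\mathbf{1}_{v}$ and $N^{T}\mathbf{1}_{v} = k\mathbf{1}_{b}$, the two cross terms in $\mathbf{1}_{S}^{T} N \mathbf{1}_{\m L}$ vanish and the all-ones term reproduces $\tfrac{r|S||\m L|}{b}$, so $i(S, \m L) - \tfrac{r|S||\m L|}{b} = \mathbf{x}^{T} N \mathbf{y} = (N^{T}\mathbf{x})^{T}\mathbf{y}$. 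By Cauchy--Schwarz this is at most $\lVert N^{T}\mathbf{x}\rVert\,\lVert\mathbf{y}\rVert$ in absolute value, and since $\mathbf{x} \perp \mathbf{1}_{v}$ we have $\lVert N^{T}\mathbf{x}\rVert^{2} = \mathbf{x}^{T} N N^{T}\mathbf{x} = (r-\lambda)\lVert\mathbf{x}\rVert^{2}$. Together with the Pythagorean identities $\lVert\mathbf{x}\rVert^{2} = |S|(1 - |S|/v)$ and $\lVert\mathbf{y}\rVert^{2} = |\m L|(1 - |\m L|/b)$ this is exactly the claimed inequality.

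For equality I would first dispose of the trivial cases $S \in \{\emptyset, X\}$ or $\m L \in \{\emptyset, \m B\}$, i.e.\ $\mathbf{x} = 0$ or $\mathbf{y} = 0$. Otherwise equality in Cauchy--Schwarz forces $N^{T}\mathbf{x} = c\,\mathbf{y}$ for some scalar $c$; reading off the coordinate indexed by a block $B$ (and using $N^{T}\mathbf{1}_{v} = k\mathbf{1}_{b}$) shows that $|B \cap S|$ equals one constant for $B \in \m L$ and another for $B \in \m B \setminus \m L$, and complementing inside each block gives the same assertion for $X \setminus S$. Applying $N$ to $N^{T}\mathbf{x} = c\,\mathbf{y}$ and using $N N^{T}\mathbf{x} = (r-\lambda)\mathbf{x}$ shows $N\mathbf{y}$ is a scalar multiple of $\mathbf{x}$ (note $c \neq 0$, else $\mathbf{x} = 0$ since $r > \lambda$); reading off the coordinate indexed by a point (using $N\mathbf{1}_{b} = r\mathbf{1}_{v}$) then gives that every point of $S$, and every point of $X \setminus S$, lies in a constant number of blocks of $\m L$, hence of $\m B \setminus \m L$ as well, which is all of the asserted regularity.

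I do not expect a genuine obstacle here: the argument rests entirely on the single structural fact that a $2$-design's incidence matrix has exactly two distinct singular values, with everything else being bookkeeping. The only places demanding a little attention are correctly identifying the relevant singular value as $\sqrt{r-\lambda}$, handling the degenerate vectors $\mathbf{x} = 0$ or $\mathbf{y} = 0$ in the equality analysis, and keeping track of which of the regularity statements are obtained directly and which follow by complementation.
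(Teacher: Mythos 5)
Your proof is correct and follows essentially the same route as the paper: the paper obtains the lemma by applying the bipartite expander mixing lemma to the design's incidence graph (whose nontrivial singular value is $\sqrt{r-\lambda}$), citing Haemers and Brouwer--Haemers for the equality case, and your incidence-matrix decomposition plus Cauchy--Schwarz is precisely the standard proof of that lemma with the equality bookkeeping carried out explicitly. The only caveat is the degenerate situations $S\in\{\emptyset,X\}$ or $\mathcal{L}\in\{\emptyset,\mathcal{B}\}$, where equality holds trivially but the stated regularity conclusion can fail, so these cases must simply be excluded from the equality claim rather than ``disposed of'' --- a blemish in the statement as phrased, not in your argument.
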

\begin{rem}
See \cite[Theorem 5.1]{Haemers95_interlacing} or \cite[Theorem 4.9.1]{Brouwer-Haemers_book} where equality in the bound of Lemma \ref{lem:incidence_bound} is considered.
A proof of this bound that does not use any spectral techniques is given in \cite[Section 9]{Murphy-Petridis16}. 
\end{rem}

Several applications of this bound have been found \cite{WSV12, Lund-Saraf-Wolf16, Murphy-Petridis16, Vinh11}, but to our knowledge our result is the first one on blocking sets. 

\section{Proof of the upper bound}
\label{sec:proof}
The points and lines of a finite projective plane $\pi$ of order $n$ give rise to a $2$-$(n^2 + n + 1, n + 1, 1)$ design with $r = k = n + 1$ and $b = v = n^2 + n + 1$. 
Let $S$ be a minimal $t$-fold blocking set in $\pi$.  
For each point of $S$ choose a single $t$-secant of $S$ through that point (this can be done because $S$ is minimal), and let $\m L$ be the collection of these $t$-secants. 
We must have $t|\m L| \geq |S|$ since these $t$-secants cover all the points of $S$. 
So we will assume that $|\m L| = \lceil |S|/t \rceil$ by throwing away extra lines, if necessary.
The number of incidences between $S$ and $\m L$ is $t|\m L| = t \lceil |S|/t \rceil$.
Hence by Lemma \ref{lem:incidence_bound}, we obtain the following inequality for $x \coloneqq |S|$
\[\left\lvert t \left\lceil \frac{x}{t} \right\rceil - \frac{(n+1)x\lceil \frac{x}{t} \rceil}{(n^2 + n + 1)} \right\rvert \leq \sqrt{nx \left\lceil \frac{x}{t} \right\rceil \left(1 - \frac{x}{(n^2 + n+ 1)}\right)\left(1 - \frac{\lceil \frac{x}{t} \rceil}{(n^2 + n + 1)}\right)}.\]
Dividing both sides by $\lceil \frac{x}{t} \rceil$ yields
\[\left\lvert t - \frac{(n+1)x}{(n^2 + n + 1)} \right\rvert \leq \sqrt{nx \left(1 - \frac{x}{(n^2 + n+ 1)}\right)\left(\frac{1}{\lceil \frac{x}{t} \rceil} - \frac{1}{(n^2 + n + 1)}\right)}.\]
Since $\lceil \frac{x}{t} \rceil \geq \frac{x}{t}$ we can also write
\[\left\lvert t - \frac{(n+1)x}{(n^2 + n + 1)} \right\rvert \leq \sqrt{nx \left(1 - \frac{x}{(n^2 + n+ 1)}\right)\left(\frac{t}{x} - \frac{1}{(n^2 + n + 1)}\right)}.\]
Squaring both sides and then simplifying the above inequality, we get

\begin{equation}
\label{eq_quadratic}
x^2 - ((t - 1)n + 2t)x - t(n - t)(n^2 + n + 1) \leq 0.
\end{equation}
This implies that $x$ must lie within the range defined by the roots of the quadratic equation $x^2 - ((t - 1)n + 2t)x - t(n - t)(n^2 + n + 1) = 0$. 
The roots are \[\alpha = \frac{1}{2} \left((t - 1) n + 2t - n\sqrt{4tn - 3t^2 + 2t + 1}\right)\] and \[\beta = \frac{1}{2} \left((t - 1) n + 2t + n\sqrt{4tn - 3t^2 + 2t + 1}\right).\] 
The inequality $x \leq \beta$ proves the result.  \hfill \qed 

\begin{rem} 
Note that the only property of the set $S$ we have used is that through each point of $S$ there exists a $t$-secant to $S$. 
For $t = 1$, such sets are known as \textit{tangency sets} \cite{Bruen-Drudge99} or \textit{strong representative systems} \cite{Szonyi91}, and in fact all the proofs of the Bruen-Thas upper bound only use the property that through every point there exists a tangent. 
In particular, we have given a new proof of the fact that a polarity\footnote{a bijective incidence preserving map between points and lines which equals its inverse} of a finite projective plane of order $n$ can have at most $n \sqrt{n} + 1$ absolute points, proved by Baer \cite{Baer46} when $n$ is not a square and by Seib \cite{Seib70} when $n$ is a square. 
When there is exactly one tangent through each point of the set then it is called a semioval, for which we get the same upper bound of $n \sqrt{n} + 1$, originally proved by Thas \cite{Thas74}.

Tangency sets in $\mathrm{AG}(n,q)$ and $\mathrm{PG}(n,q)$ with respect to lines are related to finite field Nikodym sets, which are sets $S$ of points for which through each point $x$ of the underlying space there exists a line $L_x$ with the property that $L_x \setminus \{x\} \subseteq S$. 
Clearly the complement of a Nikodym set is a tangency set and hence upper bounds on  tangency sets imply lower bounds on  Nikodym sets. 
We refer to \cite{Lund-Saraf-Wolf16} for further discussion on the Nikodym sets. 
\end{rem}

\section{The case of equality in the bound}
\label{sec:equality}
Let $\pi$ be a projective plane of order $n$. 
Lemma \ref{lem:incidence_bound} also implies that if $S$ is a $t$-fold blocking set of size $bn + t$ in $\pi$, with $b = \frac{1}{2}(\sqrt{4tn  - (3t + 1)(t - 1)}  + t - 1)$, then the following must hold (along with the natural condition that $b$ is an integer): 
\begin{itemize}
\item for every point of $S$ there exists a unique element of $\m L$ which contains $S$,
\item every line of $\pi$ which is not in the set $\m L$ defined in Section \ref{sec:proof} intersects $S$ in a constant number of points.
\end{itemize}
Fix a point $x$ of $S$ and let $L$ be the unique $t$-secant of $S$ contained in $\m L$ and through $x$. 
Then the remaining $bn$ points of $S$ are covered by the remaining $n$ lines through $x$ in $\pi$, all of which intersect $S$ in a constant number of points. 
This constant must then be equal to $b$. 
Therefore, in case of equality every line intersects $S$ in $t$ points or $b + 1 = \frac{1}{2}(\sqrt{4tn  - (3t + 1)(t - 1)}  + t - 1) + 1$ points.

A set of type $(k_1, k_2)$ in a finite projective plane of order $n$ is a set $S$ of points with the property that each line intersects $S$ in exactly $k_1$ or $k_2$ points. 
These sets have been studied by several authors \cite{deFinis81, Penttila-Royle95, Tallini87} and they are related to various objects in finite geometry. 
We have just shown that the $t$-fold blocking sets which meet the bound in Theorem \ref{thm:main} are sets of type $(t, b + 1)$ where $b = \frac{1}{2}(\sqrt{4tn  - (3t + 1)(t - 1)}  + t - 1)$. 
Therefore, such $t$-sets must satisfy the necessary conditions given in \cite[Section 2]{Penttila-Royle95}. 
In particular, $b + 1 - t$ must divide $n$. 
\begin{lem} In case $n$ is a prime power $q$ the only possible values of $t$ for which equality can be reached are 
\begin{itemize}
\item $t=1$ when $q$ is a square, in this case $S$ is a unital in $\pi$. 
\item $t=q-\sqrt{q}$ when $q$ is a square, in this case $S$ is the complement of a subplane of order $\sqrt{q}$ in $\pi$.
\item $t=q$ for any $q$, in this case $S$ is the plane $\pi$ with one point removed.
\end{itemize} 
\end{lem}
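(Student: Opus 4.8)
The plan is to turn the equality conditions from Section~\ref{sec:equality} into a Diophantine equation and solve it. Write $q = p^h$ and put $m := b + 1 - t$, so that in the equality case $S$ is a set of type $(t, m + t)$ with $|S| = (m + t - 1)q + t$. Two arithmetic facts are then forced. First, $m \mid q$: this is the divisibility $b + 1 - t \mid n$ noted just above (equivalently, the number of $t$-secants through a point not in $S$ equals $1 + q/m$, which must be an integer). Second, since $|S|$ and $b$ are integers, $\sqrt{4tq - (3t+1)(t-1)} = 2b - t + 1 = 2m + t - 1$ is an integer; squaring and simplifying gives
\[ t\,(q - m - t + 1) \;=\; m(m-1). \]
Setting $u := q - m - t + 1$, we must therefore solve $tu = m(m-1)$ and $t + u = q - m + 1$ in positive integers, subject to $m \mid q$. (We may assume $m \ge 1$, since $m = 0$ forces $t = q + 1$ and $S = \pi$.)

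First I would dispose of the two extreme divisors of $q$. If $m = q$ the equation becomes $-t(t-1) = q(q-1)$, impossible for $q \ge 2$ and $t \ge 1$; hence $m = p^j$ with $0 \le j \le h - 1$. If $m = 1$ the equation reduces to $t(q - t) = 0$, so $t = q$, and a set of type $(q, q+1)$ with $q^2 + q$ points is the plane $\pi$ with one point deleted --- which is indeed a minimal $q$-fold blocking set (its $q$-secants are the lines through the deleted point). This is the third case, so from now on $2 \le m = p^j \le q/p$, and in particular $h \ge 2$.

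The heart of the argument is a $p$-adic computation. Since $m - 1 = p^j - 1$ and $t + u = q - m + 1 = p^h - p^j + 1$ are both coprime to $p$, while $tu = p^j(m - 1)$, exactly one of $t, u$ carries the full power $p^j$: if $p$ divided both it would divide their sum, and since $p \nmid m - 1$ all of $p^j$ sits on one factor. Say $t = m t'$; then $t' u = m - 1$, so $t' \mid m - 1$, whence $1 \le t' \le m - 1$ and $u = (m - 1)/t'$. Substituting into $t + u = q - m + 1$ and clearing $t'$ yields $t'(q + 1) + 1 = m(t'^2 + t' + 1)$, i.e. $t' p^h + t' + 1 = p^j(t'^2 + t' + 1)$. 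As $p^j$ divides the right side and divides $t' p^h$ (because $j \le h$), it divides $t' + 1$; but $2 \le t' + 1 \le m = p^j$, so $t' + 1 = p^j$, giving $t' = m - 1$ and $u = 1$. Putting this back into $t + u = q - m + 1$ gives $q = m^2$. Hence $q$ is a square, $m = \sqrt q$, and $\{t, u\} = \{m(m-1), 1\} = \{q - \sqrt q, 1\}$, so $t \in \{1, q - \sqrt q\}$: the first two cases.

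It remains to identify $S$ in these two square-order cases. If $t = 1$, then $b + 1 = \sqrt q + 1$ and $|S| = q\sqrt q + 1$, and $S$ is of type $(1, \sqrt q + 1)$; since any two of its points lie on a common line, which then meets $S$ in $\sqrt q + 1$ points, the $(\sqrt q + 1)$-secants form a $2$-$(q\sqrt q + 1, \sqrt q + 1, 1)$ design on $S$, i.e. $S$ is a unital. If $t = q - \sqrt q$, then $b + 1 = q$ and $|S| = q^2 - \sqrt q$, so the complement $S^c$ has $q + \sqrt q + 1$ points and meets every line in $1$ or $\sqrt q + 1$ points; by the classical characterization of Baer subplanes (a set of $q + \sqrt q + 1$ points meeting every line in $1$ or $\sqrt q + 1$ points is a subplane of order $\sqrt q$), $S^c$ is such a subplane, so $S$ is the complement of a Baer subplane. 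The main obstacle is the $p$-adic bookkeeping --- in particular justifying that the full power $p^j$ lands on a single one of $t$ and $u$, and that $p^j \mid t' + 1$ --- together with invoking, rather than reproving, the two-intersection-set characterizations of unitals and Baer subplanes, which are classical and which I would cite explicitly.
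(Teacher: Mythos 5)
Your proposal is correct and follows essentially the same route as the paper: it combines the quadratic identity obtained by squaring the expression for $b$ (the paper's $(\dagger)$) with the divisibility $b+1-t \mid n$ coming from the two-intersection-number condition (the paper's $(\star)$), and then settles the resulting Diophantine constraint by elementary $p$-adic casework over $q=p^h$, ending with the same standard identifications of $S$ as a unital, the complement of a Baer subplane, or the plane minus a point. The only real difference is organizational: your substitution $u=q-m-t+1$, giving the symmetric system $tu=m(m-1)$, $t+u=q-m+1$, compresses the paper's four-case analysis on the valuations of $t$ and $b+1-t$ into a single valuation argument plus the $t\leftrightarrow u$ symmetry.
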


\begin{proof}
From the expression for $b$ we obtain 
\[
b^2+b(1-t)-t+t^2=tn. \tag{\(\dagger\)}
\]
Also 
\[
b-t+1 \text{ divides } n, \tag{\(\star\)}
\] see e.g. Section 2 of \cite{Penttila-Royle95}.
Assume now that $n=q=p^k$, then by $(\star)$ we have $b-t+1=p^h$. Write $t=\alpha p^l$, with $(\alpha,p)=1$.
 Hence $(\dagger)$ becomes
\begin{equation} \label{eqn}
p^h(p^h+\alpha p^l-1)-\alpha p^l+\alpha^2 p^{2l} = \alpha p^{l}q
\end{equation}

By $(\dagger)$ $t$ is a divisor of $b(b+1)=(p^h-1+t)(p^h+t)$, hence $t=\alpha p^l$ is a divisor of $p^h(p^h-1)$, so $l\leq h$ and $\alpha$ divides $p^h-1$ $(\ast)$. We will distinguish four cases.

{\bf Case I} $l>0,h>0,l<h$: dividing (\ref{eqn}) by $p^l$ implies that $p$ divides $\alpha$, a contradiction.

{\bf Case II} $l>0,h>0,l=h$: Then after division of (\ref{eqn}) by $p^h$ we obtain
\begin{equation}\label{eqn2}
(p^h+\alpha p^h-1)-\alpha+\alpha^2 p^{h} = \alpha q
\end{equation}
So $\alpha = \beta p^h-1$ and by (*) $\alpha = p^h-1$, and (\ref{eqn2}) yields after simplification that $p^{2h}=q$, hence $q$ is a square, $t=q-\sqrt{q}$ and $b=q-1$, and $S$ is the complement of a Baer subplane.

{\bf Case III} $l=0,h>0$: Then from (\ref{eqn}) we get $p^h(p^h+\alpha-1)-\alpha+\alpha^2 = \alpha q$ which implies that $\alpha^2-\alpha$ is a multiple of $p^h$. Since $(\alpha,p)=1$ we must have $\alpha = \beta p^h+1$. By $(\ast)$ $\beta=0$, so $\alpha=1$ implying $p^{2h}=q$ and hence $q$ is a square, $t=1$ and $b=\sqrt{q}$, and $S$ is a unital.

{\bf Case IV} $l=h=0$: Then we have $b=t=q$, and $S$ is the projective plane with one point removed. \qedhere

\end{proof}

This completes the proof of Theorem \ref{thm:main}. 

\section{An alternative proof of the upper bound}
\label{sec:alternate}

In this section we give an alternate proof of the bound in Theorem \ref{thm:main}, using the so-called \textit{variance trick}. 

Again let $S$ be a minimal $t$-fold blocking set in a finite projective plane of order $n$. 
Let $|S| = bn + t$, where $b \coloneqq (|S| - t)/n$ is not necessarily an integer. 
For each $i$, let $l_i$ denote the number of lines which intersect $S$ in exactly $i$ points. 
Then by standard double counting arguments we have
\begin{equation}
\label{eq1}
\sum_{i = 1}^{n^2 + n + 1} l_i = n^2 + n + 1, 
\end{equation}
\begin{equation}
\sum_{i = 1}^{n^2 + n + 1} i l_i = |S|(n + 1), 
\end{equation}
\begin{equation}
\sum_{i = 2}^{n^2 + n + 1} i(i - 1)l_i = |S|(|S| - 1).
\end{equation}
Let 
\begin{equation}
\label{eq4}
s = \sum_{i = 1}^{n^2 + n + 1} (i - b - 1)^2l_i.
\end{equation}
Then we can compute $s$ as a function of $b, n$ and $t$ using these equations. 
Moreover, since $l_t \geq |S|/t$ (via the same argument as in Section \ref{sec:proof}), we have the inequality 
\begin{equation}
\label{eq5}
s \geq (t - b - 1)^2|S|/t = (t - b - 1)^2(bn + t)/t.
\end{equation}
Combining these equations, and simplifying, we get the following inequality
\begin{equation}
\frac{n(b + 1)(-b^2 + (t - 1)b + t(n + 1) - t^2)}{t} \geq 0,
\end{equation}
which implies that $b^2 - (t - 1)b + t^2 - t(n + 1) \leq 0.$
Therefore, $b \leq \frac{1}{2}(t - 1 + \sqrt{4tn  - (3t + 1)(t - 1)})$, and consequently $|S| = bn + t \leq \frac{1}{2} n\sqrt{4tn - (3t + 1)(t - 1)} + \frac{1}{2} (t - 1)n + t$.
Moreover,  if equality occurs then $b = \frac{1}{2}(t - 1 + \sqrt{4tn  - (3t + 1)(t - 1)})$ is an integer and from (\ref{eq4}) it follows that $l_i = 0$ for all $i \not\in \{t,  b + 1\}$. \qed

\section{Construction of a minimal $t$-fold blocking set}
\label{sec:construction}
In this section we construct a minimal $t$-fold blocking set of size $q \sqrt{q} + 1 + (t - 1)(q - \sqrt{q} + 1)$ in $\mathrm{PG}(2,q)$ for every square $q$ and $t \leq \sqrt{q} + 1$. 
While this is a factor of $\sqrt{t}$ away from the bounds that we have proved, to our knowledge this is the best (general) construction for these parameters. 

Consider a non-degenerate Hermitian variety $\m H$ in $\mathrm{PG}(2, q)$ and let $P$ be a point of $\m H$. 
Through $P$ there is a unique tangent $L$ to $\m H$ and each of the remaining $q$ lines through $P$ intersect $\m H$ in $\sqrt{q} + 1$ points. Pick $t - 1$ of these lines $L_1, \dots, L_{t - 1}$, where $t \leq \sqrt{q} + 1$.
With respect to the unitary polarity $\perp$ that defines $\m H$, we have $L_1^\perp, \dots, L_{t - 1}^\perp \in P^\perp = L$. 
Let $B = \m H \cup L_1 \cup \dots \cup L_{t - 1} \cup \{L_1^\perp, \dots, L_{t - 1}^\perp\}$. 
Then $B$ has size $q \sqrt{q} + 1 + (t - 1)(q - \sqrt{q} + 1)$. 
We will show that $B$ is a minimal $t$-fold blocking set. 

Every secant of $\m H$ intersects $B$ in at least $\sqrt{q} + 1 \geq t$ points. 
Let $M$ be a tangent to $\m H$ at the point $Q$.  
If $Q = P$, then $M = L$ and hence $M$ contains exactly $t$ points of $B$. 
If $Q$ is not contained in any $L_i$, then $M$ intersects each $L_i$ in a unique point, and thus $|M \cap B| = t$. 
If $Q$ is contained in $L_i$ for some $i$, then $M$ contains $L_i^\perp$, and moreover it intersects each $L_j$ for $j \neq i$, thus proving that $|M \cap B| = t$. Therefore, $B$ is a minimal $t$-fold blocking set of size $|B|=q \sqrt{q} + 1 + (t - 1)(q - \sqrt{q} + 1)$. 

\begin{rem}
For 2-fold blocking sets one can do slightly better, as pointed out to us by Francesco Pavese. 
Namely, consider a point $P$ of $\m H$ and the tangent $L$ through $P$. Remove $P$ and add all the other points of $L$, i.e. consider the set $S = \m H \cup L \setminus \{P\}$. Then $S$ is a minimal 2-fold blocking set of size $q\sqrt{q} + q$. Indeed, as any line not containing $P$ containing a point of $L$ as well as a point of $\m H$, lines through $P$ different from $L$ intersect $S$ in $\sqrt{q}$ points. It is minimal as any tangent line $T$ to a point $Q\neq P$ of $\m H$ intersects $S$ in exactly two points, namely $Q$ and $T\cap L$.
\end{rem}



\section{Minimal blocking sets in designs and higher dimensional spaces}
\label{sec:higher_dim}

Minimal blocking sets in a block design can be defined as sets of points which meet every block non-trivially and have the property that through each point there is a unique block which intersects the blocking set in exactly one point. 
Using the main incidence bound we can obtain an upper bound for the size of these objects as well. 

\begin{thm}
\label{thm:design}
Let $S$ be a minimal blocking set in a symmetric $2$-$(v, k, \lambda)$ design. 
Then \[|S| \leq \left(\frac{1 + \sqrt{k - \lambda}}{k + \sqrt{k - \lambda}}\right) v.\]
\end{thm}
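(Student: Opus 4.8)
\begin{prf}
The plan is to specialize the argument of Section~\ref{sec:proof}, applying the Incidence Bound to $S$ together with its tangent blocks. First I would record that in a symmetric $2$-$(v,k,\lambda)$ design we have $b=v$ and $r=k$, so that Lemma~\ref{lem:incidence_bound} reads
\[\left\lvert i(S,\m L) - \frac{k\,|S|\,|\m L|}{v}\right\rvert \le \sqrt{k-\lambda}\,\sqrt{|S|\,|\m L|\left(1-\frac{|S|}{v}\right)\left(1-\frac{|\m L|}{v}\right)}\]
for every $S\subseteq X$ and every $\m L\subseteq\m B$.

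Next, let $\m L$ be the collection consisting of one tangent block through each point of $S$; such a block exists by minimality. Since a tangent block contains exactly one point of $S$, the map sending a point of $S$ to its chosen tangent block is injective, so $|\m L|=|S|$, and moreover $i(S,\m L)=\sum_{L\in\m L}|L\cap S|=|\m L|=|S|$. Writing $x\coloneqq|S|$ and substituting into the displayed bound, the radicand on the right-hand side becomes $x^2(1-x/v)^2$; since $0\le x\le v$ this square root equals $x(1-x/v)$, and dividing through by $x>0$ gives
\[\left\lvert 1-\frac{kx}{v}\right\rvert \le \sqrt{k-\lambda}\left(1-\frac{x}{v}\right).\]

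It then remains only to solve for $x$. If $x\le v/k$ there is nothing to prove, since $v/k\le\bigl((1+\sqrt{k-\lambda})/(k+\sqrt{k-\lambda})\bigr)v$ whenever $k\ge 1$. Otherwise $kx/v-1>0$, so the left-hand side equals $kx/v-1$, and the inequality $kx/v-1\le\sqrt{k-\lambda}\,(1-x/v)$ rearranges to $\tfrac{x}{v}\bigl(k+\sqrt{k-\lambda}\bigr)\le 1+\sqrt{k-\lambda}$, which is exactly the asserted bound. I do not anticipate a real obstacle: the whole computation is a direct transcription of the planar case, which corresponds to $r=k=n+1$, $v=b=n^2+n+1$, $\lambda=1$ and $t=1$. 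The only points that need a word of care are the identification $|\m L|=i(S,\m L)=|S|$, which is where minimality enters, and the removal of the absolute value via the case split on the sign of $1-kx/v$; and one could add, exactly as in Section~\ref{sec:equality}, that the equality clause of Lemma~\ref{lem:incidence_bound} forces every non-tangent block to meet a blocking set attaining the bound in a constant number of points.
\end{prf}
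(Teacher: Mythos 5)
Your proof is correct and follows essentially the same route as the paper: one tangent block per point of $S$ gives $x$ points, $x$ blocks and $x$ incidences, and Lemma \ref{lem:incidence_bound} with $b=v$, $r=k$ yields $\lvert 1-kx/v\rvert \le \sqrt{k-\lambda}\,(1-x/v)$, which rearranges to the stated bound. The only (immaterial) difference is how the absolute value is resolved: you dispose of the case $x\le v/k$ by noting the bound is then trivial, whereas the paper rules it out directly by counting incidences with all blocks ($xk\ge v$ since every block meets $S$).
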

\begin{proof}
For each point of $S$ pick a block which intersects $S$ in exactly one point. 
In this way we have $x \coloneqq |S|$ points, $x$ blocks and exactly $x$ incidences between them.
Applying Lemma \ref{lem:incidence_bound}, we get
\[|x - rx^2/b| \leq \sqrt{r - \lambda} \sqrt{x^2(1 - x/v)(1 - x/b)}.\]
Since $b = v$ and $r = k$ in a symmetric design, we can simplify this to \[x \leq \left(\frac{1 + \sqrt{k - \lambda}}{k + \sqrt{k - \lambda}}\right) v	,\]
assuming that $x \geq v/k$. 
The assumption is true as it can be shown by counting incidences between points of $S$ and all blocks of the design.
We have $xr \geq b$ since there are exactly $r = k$ blocks through each point of $S$ and each of the $b = v$ blocks must contain at least one point of $S$. 
\end{proof}

From this general result on block designs, we obtain the following result of Bruen and Thas on tangency sets with respect to hyperplanes in 
$\mathrm{PG}(n,q)$.
\begin{cor}[\cite{Bruen-Thas82, Thas74}]
\label{cor:Bruen-Thas}
Let $S$ be a set of points in $\mathrm{PG}(n, q)$ such that  through each point of $S$ there exists a tangent hyperplane to $S$. 
Then $|S| \leq 1 + q^{(n+1)/2}$. 
\end{cor}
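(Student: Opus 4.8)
The plan is to realise $\mathrm{PG}(n,q)$ as a symmetric design and quote Theorem~\ref{thm:design}. First I would recall the standard fact that the points and hyperplanes of $\mathrm{PG}(n,q)$ form a symmetric $2$-$(v,k,\lambda)$ design with $v = (q^{n+1}-1)/(q-1)$, $k = (q^{n}-1)/(q-1)$ and $\lambda = (q^{n-1}-1)/(q-1)$; the key numerical input is that $k - \lambda = q^{n-1}$, so that $\sqrt{k-\lambda} = q^{(n-1)/2}$.

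Next I would check that the hypothesis of the corollary matches the configuration used in the proof of Theorem~\ref{thm:design}. A hyperplane is tangent to $S$ exactly when it meets $S$ in a single point, i.e.\ it is a block of the design containing exactly one point of $S$. Hence, choosing for each point of $S$ a tangent hyperplane through it, we obtain $x \coloneqq |S|$ points and $x$ blocks (the chosen hyperplanes are pairwise distinct, since a hyperplane tangent to $S$ at a point $p$ contains no other point of $S$), with exactly $x$ incidences between them --- precisely the situation to which the incidence estimate in the proof of Theorem~\ref{thm:design} is applied.

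I would then run that same argument. If $|S| \ge v/k$, the proof of Theorem~\ref{thm:design} gives $|S| \le (1 + \sqrt{k-\lambda})\,v / (k + \sqrt{k-\lambda})$; substituting $\sqrt{k-\lambda} = q^{(n-1)/2}$ together with the values of $v$ and $k$, the right-hand side simplifies exactly. Concretely, writing $a = q^{(n-1)/2}$ one has $q^{n} = a^2 q$ and $q^{n+1} = a^2 q^2$, so $(1+a)v/(k+a) = (1+a)(a^2q^2-1)/\big((a+1)(aq-1)\big) = aq + 1 = 1 + q^{(n+1)/2}$, which is the claimed bound. If instead $|S| < v/k = (q^{n+1}-1)/(q^n-1) \le q + 1$, then $|S| \le q < 1 + q^{(n+1)/2}$ and there is nothing to prove.

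There is no real difficulty here: the content is entirely in Theorem~\ref{thm:design} (hence ultimately in the incidence bound, Lemma~\ref{lem:incidence_bound}), and what remains is the bookkeeping of the design parameters and a short factorisation. The only point requiring a moment's attention is that Theorem~\ref{thm:design}, as stated, incorporates the auxiliary inequality $|S| \ge v/k$ (which is guaranteed there by the blocking property of $S$) and this need not hold under the weaker hypothesis of the corollary; but in that degenerate regime $S$ is so small that the bound is immediate, as indicated above.
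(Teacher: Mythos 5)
Your proof is correct and takes essentially the same route as the paper: realize the points and hyperplanes of $\mathrm{PG}(n,q)$ as a symmetric $2$-$(v,k,\lambda)$ design with $k-\lambda = q^{n-1}$, apply Theorem~\ref{thm:design}, and simplify the resulting expression to $1+q^{(n+1)/2}$. Your additional handling of the case $|S| < v/k$ (needed because the corollary's tangency hypothesis does not include the blocking property that Theorem~\ref{thm:design} uses for that step) is a small but legitimate refinement that the paper leaves implicit.
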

\begin{proof}
The points and hyperplanes of $\mathrm{PG}(n, q)$ form a symmetric $2$-$(v, k, \lambda)$ design with $v = (q^{n+1} - 1)/(q - 1)$, $k = (q^n - 1)/(q - 1)$ and $\lambda = (q^{n-1} - 1)/(q - 1)$. 
After substituting these values and simplifying, we get the bound. 
\end{proof}

For $n=2,3$ the bound in Corollary \ref{cor:Bruen-Thas} is sharp, while for $n > 3$ it never is.
This follows from the work of Thas \cite{Thas74}, as an easy counting argument shows that in case of equality, every tangent line to the set lies in a tangent hyperplane. The matching constructions for $n=2$ and $n=3$ are those of a unital in $\mathrm{PG}(2,q)$ and an ovoid in $\mathrm{PG}(3,q)$ respectively. 

\begin{rem}
Another bound on these tangency sets can be obtained by looking at the linear code spanned by the characteristic vectors of the hyperplanes with respect to the points, over the characteristic field. 
It is well known that if $q = p^h$, then the dimension of this $\mathbb{F}_p$-linear code is equal to $(\binom{p + n - 1}{n - 1})^h + 1$ (see for example \cite[Theorem 1.1]{Blokhuis-Moorhouse95}). 
This is an upper bound on the size of a tangency sets since the tangent hyperplanes that we construct give rise to linearly independent vectors. 
This bound is asymptotically better when $p$ is fixed and $h$, $n$ vary. 
\end{rem}

It is natural to consider minimal multiple blocking sets in the point-hyperplane designs of $\mathrm{PG}(n,q)$ for $n > 2$ as well. 
Let $S$ be a $t$-fold blocking set with respect to hyperplanes in $\mathrm{PG}(n,q)$, and let $\theta_n(q) = (q^n - 1)/(q - 1)$, so that $\theta_{n+1}(q)$ is the number of points (or hyperplanes) in $\mathrm{PG}(n,q)$. 

For each point of $S$ choose a single $t$-hyperplane of $S$ through that point, and let $\m H$ be the collection of these $t$-hyperplanes. 
We must have $t|\m H| \geq |S|$ since these $t$-hyperplanes cover all the points of $S$. 
So we will assume that $|\m H| = \lceil |S|/t \rceil$ by throwing away extra hyperplanes if necessary.
The number of incidences between $S$ and $\m H$ is $t|\m H| = t \lceil |S|/t \rceil$.

Then for $x \coloneqq |S|$ Lemma \ref{lem:incidence_bound} gives
\[\left \lvert t \left\lceil \frac{x}{t} \right\rceil - \frac{\theta_n(q) x\lceil \frac{x}{t} \rceil}{\theta_{n+1}(q) t} \right \rvert  \leq \sqrt{\theta_n(q) - \theta_{n-1}(q)} \sqrt{x\left\lceil \frac{x}{t} \right\rceil \left(1 - \frac{x}{\theta_{n+1}(q)}\right) \left( 1 - \frac{\lceil \frac{x}{t} \rceil}{ \theta_{n+1}(q)} \right)}.\]

With the same argument as in Section \ref{sec:proof} this simplifies to

\begin{equation}
\label{eq:hyperplanes}
(\theta^2_n(q) - q^{n-1}) x^2 + (- 2 \theta_n(q) \theta_{n+1}(q) t + q^{n-1} (\theta_{n+1}(q) + t \theta_{n+1}(q))) x  + t^2 \theta^2_{n+1}(q) - q^{n-1} t \theta^2_{n+1}(q) \leq 0.
\end{equation}

Note that for $n = 2$ we retrieve our equation (\ref{eq_quadratic}) for $t$-fold blocking sets in the projective plane.
We do not solve this inequality to get the precise upper bound for general $n$ but we can easily deduce from equation (\ref{eq:hyperplanes}) that the bound is $O(t^{1/2}q^{(n+1)/2} + qt)$, where the second term  plays a role only when $t$ is large (note that $t$ can be as large as $\theta_n(q) = q^{n-1} + \cdots + q + 1$). 

For $n = 3$ and $t \leq q$, a minimal $t$-fold blocking set with respect to the planes of size $q^2 + (t-1)q$ can be obtained by taking an ovoid in $\mathrm{PG}(3,q)$, $t-1$ tangent lines through a fixed point $P$ of the ovoid, and taking all the points on these lines along with the points of the ovoid, except the point $P$. 
When $q$ is a square, the Hermitian variety gives rise to a minimal $(q\sqrt{q} + 1)$-fold blocking set of size $q^{5/2} + q^{3/2} + q + 1$ in $\mathrm{PG}(3,q)$. 
We do not explore the $n = 3$ case further and leave it as an open problem to study large minimal $t$-fold blocking sets with respect to hyperplanes in $\mathrm{PG}(n,q)$ for $n > 2$. 

The Gaussian coefficient \[{n + 1 \brack k + 1}_q = \frac{(q^{n + 1} - 1)\cdots(q^{n+1} - q^{n-k})}{(q^{k+1} - 1)\cdots(q^{k+1} - q^{k})}\]
counts the total number of $k$ dimensional subspaces of $\mathrm{PG}(n, q)$. 
Points and $k$-dimensional subspaces of $\mathrm{PG}(n, q)$ form a $2 \mh ({n + 1 \brack 1}_q, {k + 1 \brack 1}_q, {n - 1 \brack k - 1}_q)$ design, with $r = {n  \brack k}_q$ and $b = {n + 1 \brack k + 1}_q$.
We have already studied the symmetric design which one gets by taking $k = n - 1$. 
This design is non-symmetric for all $k < n - 1$, and the problem of large minimal blocking sets becomes much harder in that case. 
The following bound can be obtained for minimal blocking sets in non-symmetric designs.

\begin{thm}
Let $S$ be a minimal blocking set in a $2$-$(v, k, \lambda)$ design. 
Then \[|S| < \left( \frac{1 + \sqrt{r - \lambda}}{k}\right) v.\]
\end{thm}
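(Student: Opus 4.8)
The plan is to reuse, almost verbatim, the argument behind Theorem~\ref{thm:design}; the only essential change is that in a non-symmetric design we no longer have $b = v$ and $r = k$, so the resulting bound will be weaker. First I would fix, for each point $p$ of $S$, a block $B_p$ with $B_p \cap S = \{p\}$; such a block exists because $S$ is a minimal blocking set, and these blocks are automatically pairwise distinct, since two points of $S$ cannot lie on a common tangent block. Hence $\m L \coloneqq \{B_p : p \in S\}$ is a set of exactly $x \coloneqq |S|$ blocks and the number of incidences $i(S, \m L)$ equals $x$. Feeding this into Lemma~\ref{lem:incidence_bound}, using the identity $r/b = k/v$ (equivalently $vr = bk$), and dividing both sides by $x$, I obtain
\[\left\lvert 1 - \frac{kx}{v}\right\rvert \le \sqrt{r-\lambda}\,\sqrt{\left(1 - \frac{x}{v}\right)\left(1 - \frac{x}{b}\right)}.\]

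Next I would pin down the sign on the left. As in Theorem~\ref{thm:design}, counting incidences between $S$ and all $b$ blocks gives $xr \ge b$ (each of the $b$ blocks meets $S$), so $x \ge b/r = v/k$ and the left-hand side equals $\frac{kx}{v} - 1 \ge 0$. On the right-hand side I would discard the factor $1 - x/b$ by noting it lies in $[0,1]$ (indeed $0 \le x = |\m L| \le b$ since $\m L \subseteq \m B$); this is exactly the step at which non-symmetry costs us something, because in the symmetric case $1 - x/b = 1 - x/v$ is retained and one recovers the sharper bound of Theorem~\ref{thm:design}. What remains is $\frac{kx}{v} - 1 \le \sqrt{r-\lambda}\,\sqrt{1 - \frac{x}{v}}$; squaring and writing $u = x/v$ and $m = r - \lambda$ (we may assume $m > 0$, since $m = 0$ forces $v = k$ and no minimal blocking set exists) leaves the quadratic inequality $k^2 u^2 + (m - 2k)u + (1 - m) \le 0$, whose discriminant is $m\bigl(m + 4k(k-1)\bigr) > 0$. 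Therefore $u$ must lie below the larger root $u_+ = \bigl(2k - m + \sqrt{m(m + 4k(k-1))}\bigr)/(2k^2)$.

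The final step is to check that $u_+ < (1 + \sqrt m)/k$. After clearing the denominator $2k^2$ and isolating the radical, this is equivalent to $\sqrt{m + 4k(k-1)} < \sqrt{m} + 2k$, which upon squaring collapses to the trivially true inequality $-4k < 4k\sqrt m$; this is the only genuine computation in the argument, and it is strict, which yields the strict bound in the statement. (Should the degenerate case $\frac{kx}{v} - 1 \le 0$ occur instead, then $x \le v/k$, which is even smaller.) Combining everything, $|S| = uv \le u_+ v < \frac{1 + \sqrt{r - \lambda}}{k}\, v$. I do not expect any real obstacle: the only points needing a little care are keeping track of the sign with which the absolute value opens and checking that the square roots stay real, both of which follow from the chain $v/k \le |S| \le v$ together with $|\m L| \le b$; everything else is elementary algebra and a direct transcription of the symmetric argument.
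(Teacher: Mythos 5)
Your proposal is correct and takes essentially the same approach as the paper: the same choice of one tangent block per point of $S$, the same application of Lemma \ref{lem:incidence_bound} with $x$ blocks and $x$ incidences, plus the counting argument $xr \ge b$ that the paper itself only needs in the symmetric case (Theorem \ref{thm:design}). The paper's endgame is shorter --- it bounds $(1 - x/v)(1 - x/b) < 1$ outright, so that $rx/b - 1 < \sqrt{r-\lambda}$ follows without any sign analysis, quadratic, or root comparison --- while your retention of the factor $1 - x/v$ yields a marginally sharper intermediate bound before you weaken it to the stated one; note that both arguments tacitly assume the non-degenerate situation $r > \lambda$ (equivalently $v > k$), where your parenthetical claim that no minimal blocking set exists when $v = k$ is not quite the right justification, though that degenerate case is excluded by convention anyway.
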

\begin{proof}
For each point of $S$ pick a block which intersects $S$ in exactly one point. 
Then for $x := |S|$ the incidence bound gives us:
\[|x - rx^2/b| \leq \sqrt{r - \lambda} \sqrt{x^2(1 - x/v)(1 - x/b)} < x\sqrt{r - \lambda} ,\]
where the second inequality follows from the fact that $(1 - x/v)(1 - x/b) < 1$. 
This simplifies to 
$x \leq (1 + \sqrt{r - \lambda})r/b$ and since $r/b = v/k$, we get our result. 
\end{proof}

This bound is non-trivial only if $1 + \sqrt{r - \lambda} < k$. 
Thus, in particular, it can be shown that we do not obtain any non-trivial upper bound on the cardinality of minimal blocking sets with respect to $k$-spaces in $\mathrm{PG}(n, q)$ for any $k < n - 1$.
For $k = 1$ and $n = 3$, Lund, Saraf and Wolf have proved a non-trivial upper bound using Lemma \ref{lem:incidence_bound} in a slightly different way \cite{Lund-Saraf-Wolf16}. 
It should also be noted that the lower bounds on Nikodym sets obtained using the polynomial method (see \cite{Lund-Saraf-Wolf16} and the references therein) give us non-trivial upper bounds on minimal blocking sets in $\mathrm{PG}(n, q)$ with respect to lines.

\section{Further generalization}
\label{sec:further}
Instead of looking at sets of points in which through each point there is at least one $t$-secant, we can look at the more general scenario where through each point there are at least $s$ lines which are $t$-secants, for some integer $s \geq 1$. 
We can apply the main incidence bound for this problem as well and get the following result. 

\begin{thm}
\label{thm:gen}
Let $S$ be a set of points in a finite projective plane of order $n$ with the property that through each point of $S$ there exist at least $s$ lines intersecting $S$ in exactly $t$ points. 
Then $|S|$ is at most the larger root of the quadratic equation \[sx^2 - (2st(n+1) - (s+t)n)x - t(n - st)(n^2 + n + 1) = 0.\]
\end{thm}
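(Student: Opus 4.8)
The plan is to mimic the proof of the upper bound in Section \ref{sec:proof}, but with a larger collection $\m L$ of $t$-secants reflecting the hypothesis that each point of $S$ lies on at least $s$ such lines. First I would pick, for each point of $S$, a set of exactly $s$ lines through it that are $t$-secants of $S$; taking the union over all points of $S$ gives a collection $\m L$ of $t$-secants. Double counting incidences between $S$ and $\m L$ from the point side gives at least $s|S|$ incidences, while from the line side each $t$-secant contributes at most $t$ incidences, so $t|\m L| \geq i(S,\m L) \geq s|S|$, whence $|\m L| \geq s|S|/t$. As in Section \ref{sec:proof}, I would discard superfluous lines so that $|\m L| = \lceil s|S|/t \rceil$, and note that the number of incidences $i(S,\m L)$ is then exactly $s|S|$ (each of the $s$ chosen lines through each point of $S$ is retained, since every retained line is needed to cover some point; strictly one keeps a subfamily ensuring $i(S,\m L)=s|S|$ while $|\m L| \leq \lceil s|S|/t\rceil$, which is all the argument requires).

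Next I would feed these values into the Incidence Bound (Lemma \ref{lem:incidence_bound}) applied to the $2$-$(n^2+n+1, n+1, 1)$ design of points and lines of $\pi$, with $r = k = n+1$ and $b = v = n^2+n+1$. Writing $x \coloneqq |S|$ and $m \coloneqq |\m L|$, this reads
\[
\left\lvert s x - \frac{(n+1) x m}{n^2+n+1} \right\rvert \leq \sqrt{n}\,\sqrt{x m \left(1 - \frac{x}{n^2+n+1}\right)\left(1 - \frac{m}{n^2+n+1}\right)}.
\]
Dividing through by $\sqrt{xm}$ and then using $m \geq s x / t$, i.e. $1/m \leq t/(sx)$, exactly as the $\lceil x/t\rceil \geq x/t$ step in Section \ref{sec:proof}, I would bound the right-hand side and isolate $x$. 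Concretely, after dividing by $m$ the left-hand side becomes $\lvert s x/m - (n+1)x/(n^2+n+1)\rvert$; I would instead keep the form that is linear-friendly: divide the squared inequality appropriately so that the resulting estimate only involves $x$, $s$, $t$, $n$. Substituting $m = sx/t$ (the worst case for the relevant bound, justified because the function is monotone in the admissible range, just as in Section \ref{sec:proof}) and squaring both sides yields, after clearing the common factor $x$ and simplifying, the quadratic inequality
\[
s x^2 - \bigl(2st(n+1) - (s+t)n\bigr)x - t(n - st)(n^2+n+1) \leq 0,
\]
so $|S|$ is at most the larger root of the associated quadratic equation. Sanity check: putting $s = 1$ recovers equation \eqref{eq_quadratic}, since $2t(n+1) - (1+t)n = (t-1)n + 2t$.

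The main obstacle I anticipate is bookkeeping with the ceiling $\lceil s|S|/t\rceil$ and making precise the claim that we may assume $i(S,\m L) = s|S|$ while retaining $|\m L| \leq \lceil s|S|/t\rceil$: one must check that throwing away lines to reach this cardinality does not destroy the property that each point of $S$ still has $s$ retained $t$-secants through it — in fact one should keep all $s|S|$ chosen lines (counted with the multiplicity of how many points selected them), observe $i(S,\m L) = s|S|$ exactly, and only use $|\m L| \geq s|S|/t$ in the inequality, so that the substitution $m = sx/t$ is a valid lower-bound substitution in the term $-m/(n^2+n+1)$ and the direction of the inequality is preserved. The algebraic simplification after squaring is routine but somewhat lengthy; I would organize it by first cancelling $x > 0$ and the factor $m = sx/t$, reducing to an inequality in $x$ alone, then expanding and collecting powers of $x$. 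Verifying the coefficient of $x$ and the constant term against the $s=1$ case provides a useful check that no sign errors have crept in.
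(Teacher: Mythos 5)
Your overall strategy --- choose $s$ $t$-secants through each point of $S$, apply Lemma \ref{lem:incidence_bound} to the $2$-$(n^2+n+1,n+1,1)$ design of points and lines, and reduce to a quadratic in $x$ --- is exactly the paper's, and the target quadratic is correct. But there is a genuine flaw in your incidence bookkeeping. Since every line of $\m L$ is a $t$-secant of $S$, the incidence count in the sense of Lemma \ref{lem:incidence_bound} is exactly $i(S,\m L)=t|\m L|$, and it can never be $s|S|$ unless $t|\m L|=s|S|$: a point of $S$ will in general lie on more than $s$ lines of $\m L$ (lines selected by other points), and your ``multiset'' reading does not help, because the lemma applies to a subset of the blocks of the design, and in any case a family of $t$-secants of total size $s|S|$ would carry $ts|S|$ incidences, not $s|S|$. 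Consequently your displayed inequality, with $sx$ in place of $t|\m L|$ on the left, is not an instance of the lemma; in the regime relevant to the upper bound one has $t|\m L| < (n+1)x|\m L|/(n^2+n+1)$, so replacing $t|\m L|$ by the smaller quantity $sx\le t|\m L|$ makes the left-hand side larger, i.e.\ you are asserting something strictly stronger than what the lemma gives. The subsequent step ``substitute $m=sx/t$, justified by monotonicity'' also fails in your formulation: with the left-hand side written using $i=sx$, both sides of your inequality decrease as $m$ decreases, so there is no one-sided monotonicity to invoke.

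The repair is short and is precisely what the paper does (``simplifying as before'', as in Section \ref{sec:proof}): keep $i(S,\m L)=t|\m L|$, divide the inequality of Lemma \ref{lem:incidence_bound} by $|\m L|$ so that the left-hand side becomes $\left\lvert t-\frac{(n+1)x}{n^2+n+1}\right\rvert$, independent of $|\m L|$, while the right-hand side becomes $\sqrt{nx\left(1-\frac{x}{n^2+n+1}\right)\left(\frac{1}{|\m L|}-\frac{1}{n^2+n+1}\right)}$, which is decreasing in $|\m L|$; then the double count $t|\m L|\ge s|S|$ gives $\frac{1}{|\m L|}\le \frac{t}{sx}$, and substituting this only enlarges the right-hand side, so the inequality is preserved. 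Squaring and simplifying then yields $sx^2-(2st(n+1)-(s+t)n)x-t(n-st)(n^2+n+1)\le 0$, as required, and your $s=1$ sanity check against (\ref{eq_quadratic}) is indeed consistent.
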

\begin{proof}
Let $x \coloneqq |S|$.
For each point of $S$ pick $s$ $t$-secants to obtain a set of $s\lceil |S|/t \rceil$ lines with the total number of incidences between $S$ and these lines equal to $st \lceil |S|/t \rceil$. 
Substituting these values in Lemma \ref{lem:incidence_bound} and simplifying as before, we obtain the quadratic inequality
\[sx^2 - (2st(n+1) - (s+t)n)x - t(n - st)(n^2 + n + 1) \leq 0.\]
Therefore, $x$ lies in the range defined by the two roots of the equation, and in particular $x$ is less than or equal to the larger root. 
\end{proof}

The solutions of the quadratic equation in Theorem \ref{thm:gen} look a bit messy and not particularly insightful. 
But for the case of $t = 1$, we directly get the following result related to the semiarcs.

\begin{cor}[{\cite[Theorem 2.4]{Csajbok12}}]
Let $S$ be a set of points in a finite projective plane of order $n$ with the property that through each point of $S$, there exist at least $s$ lines intersecting $S$ in exactly $1$ point. 
Then \[|S| \leq 1 + \frac{n}{2s}\left((s - 1) + \sqrt{4sn - 3s^2 + 2s + 1}\right).\]
\end{cor}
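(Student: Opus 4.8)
The plan is simply to specialize Theorem~\ref{thm:gen} to $t = 1$ and then rewrite the resulting bound in closed form. First I would substitute $t = 1$ into the quadratic equation of that theorem, obtaining
\[
s x^2 - \bigl(2s(n+1) - (s+1)n\bigr) x - (n - s)(n^2 + n + 1) = 0 .
\]
A one-line computation gives $2s(n+1) - (s+1)n = (s-1)n + 2s$, so by Theorem~\ref{thm:gen} the size $|S|$ is bounded above by the larger root
\[
|S| \;\le\; \frac{\bigl((s-1)n + 2s\bigr) + \sqrt{\bigl((s-1)n + 2s\bigr)^2 + 4s(n - s)(n^2 + n + 1)}}{2s}.
\]

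It then remains to verify the algebraic identity that puts this into the stated form. Writing the claimed right-hand side $1 + \frac{n}{2s}\bigl((s-1) + \sqrt{4sn - 3s^2 + 2s + 1}\bigr)$ over the common denominator $2s$ shows that the rational (non-radical) parts already agree, so the task reduces to checking
\[
\bigl((s-1)n + 2s\bigr)^2 + 4s(n - s)(n^2 + n + 1) = n^2\bigl(4sn - 3s^2 + 2s + 1\bigr).
\]
Expanding the left-hand side, the constant terms $\pm 4s^2$ cancel and all the degree-$1$ terms in $n$ cancel, leaving $4s n^3 + \bigl((s-1)^2 - 4s^2 + 4s\bigr) n^2$; since $(s-1)^2 - 4s^2 + 4s = -3s^2 + 2s + 1$, this matches the right-hand side, which finishes the argument.

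I do not expect a genuine obstacle here: the corollary is a direct specialization of Theorem~\ref{thm:gen}, and the only thing requiring a modicum of care is the sign bookkeeping in the discriminant identity (and, if one wants to be thorough, observing that the radicand $4sn - 3s^2 + 2s + 1$ is nonnegative whenever a set $S$ with the stated property exists, which can only fail to be automatic when $n < s$, in which case $S$ is forced to be a single point). For $t = 1$ the hypothesis asserts precisely that each point of $S$ lies on at least $s$ tangents of $S$, which is exactly the condition studied in \cite{Csajbok12}, so we recover \cite[Theorem~2.4]{Csajbok12}.
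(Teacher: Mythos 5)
Your proposal is correct and follows exactly the paper's route: the paper obtains this corollary by directly specializing Theorem \ref{thm:gen} to $t=1$, and your substitution, identification of the larger root, and verification of the discriminant identity $\bigl((s-1)n+2s\bigr)^2 + 4s(n-s)(n^2+n+1) = n^2\bigl(4sn-3s^2+2s+1\bigr)$ are precisely the omitted computation. Nothing further is needed.
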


\begin{rem}
Similar results can be obtained for symmetric block designs using Lemma \ref{lem:incidence_bound}. 
\end{rem}

\section{Open Problems}
\label{sec:open}

The main open problem is the discrepancy between the size of known examples and the size of the upper bound in the case where $q$ is not a square. We can split this into two subproblems.

\begin{enumerate}[(1)]
\item \textit{Improve on our upper bound in case $q$ is not a square.}

In the case $t=1$ an improvement on the upper bound is obtained when $q$ is not a square in \cite{Szonyi05}. We were unable to generalize their method as it already broke down in the first step and believe that new ideas are needed in order to obtain such an improvement. 

\item \textit{Find constructions of large minimal $t$-fold blocking sets when $q$ is not a square.}

It seems very challenging to find constructions of large minimal $t$-fold blocking sets. In particular when $q$ is not a square, or if we require that the set does not contain any full lines, good examples remain to be found. In particular even for the particular case $t=q-1$, finding a large minimal $(q-1)$-fold blocking set is equivalent to finding to finding a small complete arc in $\mathrm{PG}(2,q)$ by taking complements. This problem is known to be hard and the best result is due to Kim and Vu who used the probabilistic method \cite{KimVu}.
Also note that the known lower bounds on complete arcs in $\mathrm{PG}(2,q)$ (see \cite{Ball97}) give us upper bounds on minimal $(q-1)$-fold blocking sets which are better than the bound that we obtain by taking $t = q - 1$ in Theorem \ref{thm:main}. 
\end{enumerate}

Even when $q$ is a square, the largest construction that we have for $t \not \in \{1, q - \sqrt{q}, q\}$ is far away from the upper bound. 
This leads to the third open problem.
\begin{enumerate}[(3)]
\item \textit{Find minimal $t$-fold blocking sets in $\mathrm{PG}(2,q)$ for $q$ square and $t \not\in \{1, q - \sqrt{q}, q\}$ which are larger than the one constructed in Section \ref{sec:construction}.} 
\end{enumerate}

Finally as mentioned in Section \ref{sec:higher_dim} it would be interesting to 

\begin{enumerate}[(4)]
\item \textit{Study large minimal t-fold blocking sets with respect to hyperplanes in $\mathrm{PG}(n, q)$ for $n > 2$.}
\end{enumerate}

\textbf{Address of the authors}\\
Anurag Bishnoi,
Institut F\"ur Mathematik,
Freie Universit\"at Berlin,\\
Arnimallee 3,
14195 Berlin,
Germany\\
\texttt{anurag.2357@gmail.com}\\

Sam Mattheus,
Department of Mathematics,
Vrije Universiteit Brussel,\\
Pleinlaan 2,
B-1050 Elsene,
Belgium\\
\texttt{Sam.Mattheus@vub.ac.be}\\

Jeroen Schillewaert,
Department of Mathematics, University of Auckland\\
Private Bag 92019,
Auckland 1142,
New Zealand\\
\texttt{j.schillewaert@auckland.ac.nz}\\

\end{document}